\def\supp{\operatorname{supp}}
\def\vol{\operatorname{vol}}
\def\re{\operatorname{Re}}
\newcommand{\im}{\mathop{\rm Im}}
\def\tg{\operatorname{tan}}
\def\ctg{\operatorname{cot}}
\newtheorem{theorem}{Theorem}[section]
\newtheorem{proposition}{Proposition}[section]
\newtheorem{lemma}{Lemma}[section]
\theoremstyle{definition}
\newtheorem{definition}{Definition}[section]
\newtheorem{remark}{Remark}[section]
\newtheorem{condition}{Condition}[section]
\def\im{\operatorname{Im}}
\begin{document}

\title{On traces of operators, associated with actions of compact Lie groups}

\author{Savin A.Yu., Sternin B.Yu.}
\date{}

\maketitle

\begin{abstract}
 Given a pair  $(M,X)$, where $X$ is a smooth submanifold in a closed smooth manifold $M$,
 we study the operation, which takes each operator $D$ on the ambient manifold to  a certain
 operator on the submanifold. The latter operator is called the trace of $D$. More precisely, we study traces of operators, associated with actions of compact Lie groups  on $M$. We show that traces of such operators are localized at special submanifolds in $X$ and study the structure of the  traces on these submanifolds.  
\end{abstract}

\section{Statement of the problem }

Let $(M,X)$ be a  pair, where   $M$ is a smooth manifold and $X$ is a smooth submanifold in $M$. 
The   embedding of $X$ in $M$ is denoted by   $i:X\to M$. 

Given an operator $D$  acting on functions on $M$, we define its trace on $X$ as
\begin{equation}\label{eq-43}
 i^* Di_*:H^s(X)\longrightarrow H^{s-d-\nu}(X),
\end{equation} 
where $ i^*:H^{s}(M)  \to H^{s-\nu/2}(X)$ is the boundary operator induced by the embedding   $i:X\to M$, while
$i_*:H^{s}(X)\to H^{s-\nu/2}(M)$ is the coboundary operator conjugate to  $i^*$, $d$ is the order of $D$, $\nu$ 
is the codimension of   $X$ in  $M$.  We denote the trace by   $i^!(D)$.

The aim of this paper is to study traces of $G$-operators   on   $M$. This class of operators is
associated with a smooth action of a compact Lie group   $G$ on $M$. Then a {\em $G$-operator } 
is  defined by an integral of the form   
\begin{equation}\label{eq-65}
D=\int_G D_gT_g dg,
\end{equation} 
where  $T_g$ stands for the shift operator
$$
T_gu={g^{-1}}^*u 
$$  
induced by  a diffeomorphism $g\in G$, $dg$ is the Haar measure on $G$, $\{D_g\}$ is a smooth family of pseudodifferential 
operators ($\psi$DO below) parametrized by $g\in G$.

The notion of trace in the smooth theory, i.e., in the situation, when the group is trivial, appeared
in the works \cite{NoSt1,NoSt2,Ster1}. There it was shown, in particular, that
the trace of a pseudodifferential operator is a pseudodifferential operator on   $X$. This fact made it possible to apply pseudodifferential operators
in the theory of Sobolev problems.  

If a nontrivial group $G$ acts on $M$ and we consider    $G$-operators \eqref{eq-65}
(see \cite{SaSt22,Ster20,Sav11}), the situation is completely different.  Namely, the trace
of a $G$-operator on a submanifold is an operator localized at a certain submanifold  in  $X$ (for instance,
it can be localized at a point, see \cite{SaSt36,Losh1}). Moreover,  the trace is not  a pseudodifferential operator
and is an operator of fundamentally new nature, for instance, it can be localized at a point(this can happen at the isolated fixed point of the group action), and to describe such operators, one needs not only Fourier transform (as for $\psi$DO's),
but also Mellin transform.  

The present paper is devoted to studying this new class of operators.  The main result is a localization theorem,
 which describes the set, on which the operator is localized. This theorem is
given in Section~\ref{par3}.  In subsequent sections we consider a number of examples,  which clarify
the situation both from the geometric  
(Section~\ref{par4}) and analytic (Section~\ref{par5}) points of view. Namely, we consider the situation  
when   $X$ is invariant with respect to the group action (in this case the trace of
a $G$-operator is again a $G$-operator on $X$), we also consider noninvariant submanifolds $X$, 
in which case the trace is localized on a lower-dimensional submanifold.

The work was partially supported by RFBR (projects~15-01-08392 and 16-01-00373), by Deutsche Forschungsgemeinschaft   (DFG), and also the Ministry of education and science of Russian Federation, agreement N~02.a03.21.0008. 
We are also grateful to the referee for useful remarks on a preliminary version of the paper.

\section{Localization theorem}\label{par3}

As we stressed in the introduction, the trace of a  $G$-operator $D$ on a submanifold $X\subset M$ is   generally   
localized at a certain subset in   $X$.  To describe this set, we consider the following two closed subsets in  $X$: 
\begin{enumerate}
\item The set of points  whose orbits are contained in   $X$
$$
X_G=\{x\in X \;|\; Gx\subset X\},\quad \text{where $Gx$ denotes the orbit of $x$};
$$
\item The set of points  whose orbits touch $X$
$$
\widetilde X_G= \{x\in X \;|\; T_x(Gx)\subset T_xX\}.
$$
Obviously, we have $  X_G\subset \widetilde X_G$.
\end{enumerate}

Below we assume that the following condition is satisfied:
\begin{condition}\label{cond1}
For each closed subset  $Z\subset  \widetilde X_G\setminus X_G$ there exists a family of open subsets
  $U_\varepsilon\subset X$, which contract to  $Z$ as $\varepsilon\to 0$, and such that the volume of the open set in $G$
$$
G_\varepsilon=\{g\in G \;|\; gU_\varepsilon\cap X\ne \emptyset\}
$$
 tends to zero as $\varepsilon\to 0$.
\end{condition}

Condition~\ref{cond1} is easy to check in examples (see below).  The meaning of this condition
is clear:   an element $g\in G$ takes the neighborhood  $U_\varepsilon$  to a set disjoint with   $X$ for almost all $g$
 (roughly speaking, if an orbit is not contained in   $X$,  then it is outside of  $X$ almost everywhere).

\begin{definition}
An  operator  $A$ is {\em localized on a subset} $Y\subset X$, if all compositions
$A\varphi$ are compact, whenever $\varphi$ is a smooth function   equal to zero in a neighborhood of $Y$.
\end{definition}

\begin{theorem}\label{prop1}{(on localization)}
The trace of a  $G$-operator $D$ on a submanifold $X$ is localized on the subset $X_G\subset X$.
In particular, if   $X_G$ is empty, then the trace is a compact operator.  
\end{theorem}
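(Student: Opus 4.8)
The plan is to prove the statement directly from the definition of localization: I fix a smooth function $\varphi$ on $X$ vanishing in a neighborhood of $X_G$ and show that $(i^!D)\varphi$ is compact. Writing the $G$-operator out, $(i^!D)\varphi=\int_G A_g\varphi\,dg$ with $A_g=i^*D_gT_gi_*$. Two elementary facts drive everything. First, since $G$ is compact and $\{D_g\}$ is a smooth family of $\psi$DO's of order $d$, the trace theorem for $i^*,i_*$ together with the uniform boundedness of the shifts $T_g$ gives $\|A_g\|_{H^s(X)\to H^{s-d-\nu}(X)}\le C$ uniformly in $g$. Second, by pseudolocality of $D_g$ the distribution $D_gT_gi_*(\varphi u)$ has singular support inside $g\cdot\supp\varphi$; hence if $g\cdot\supp\varphi\cap X=\emptyset$ then its restriction $i^*$ is smooth, i.e. $A_g\varphi$ is smoothing. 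Thus only the $g$ that carry $\supp\varphi$ back onto $X$ can contribute a non-compact part.

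Now $\supp\varphi$ is a compact subset of $X\setminus X_G$, and I split it according to the two regimes measured by $\widetilde X_G$. Put $Z=\widetilde X_G\cap\supp\varphi$; this is a closed subset of $\widetilde X_G\setminus X_G$, so Condition~\ref{cond1} applies and furnishes open sets $U_\varepsilon$ contracting to $Z$ as $\varepsilon\to0$ with $\vol(G_\varepsilon)\to0$, where $G_\varepsilon=\{g: gU_\varepsilon\cap X\ne\emptyset\}$. Choosing cut-offs $\psi_\varepsilon\in C^\infty(X)$ supported in $U_\varepsilon$ and equal to $1$ near $Z$, I decompose $(i^!D)\varphi=(i^!D)\big((1-\psi_\varepsilon)\varphi\big)+(i^!D)(\psi_\varepsilon\varphi)$ and treat the two summands by entirely different mechanisms.

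For the first summand, $\supp((1-\psi_\varepsilon)\varphi)$ avoids $\widetilde X_G$ once $\varepsilon$ is small, so there the orbits meet $X$ transversally. Here I pass to the oscillatory-integral representation of the kernel of $\int_G A_g(1-\psi_\varepsilon)\varphi\,dg$, whose phase carries a factor $\sigma_g(x')\cdot\eta_s$ pairing the normal height $\sigma_g(x')$ of $gX$ over the source point against the normal frequency $\eta_s$. The locus $X_G$ is exactly where this factor vanishes identically in $g$, and $\widetilde X_G$ where it vanishes to first order, so transversality of the orbit to $X$ is precisely non-degeneracy of the phase in the joint variable $(g,\eta_s)$; non-stationary phase, made uniform over the compact set of relevant $g$, then yields a smooth kernel, and this summand is smoothing, hence compact. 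For the second summand I split the group integral as $\int_{G\setminus G_\varepsilon}A_g\psi_\varepsilon\varphi\,dg+\int_{G_\varepsilon}A_g\psi_\varepsilon\varphi\,dg$. When $g\notin G_\varepsilon$ the support $g\cdot\supp(\psi_\varepsilon\varphi)$ misses $X$, so by the pseudolocality remark the first integral is smoothing; the second is estimated crudely by $\int_{G_\varepsilon}\|A_g\|\,dg\le C\,\vol(G_\varepsilon)$. Consequently $(i^!D)\varphi$ differs from a compact operator by an operator of norm at most $C\,\vol(G_\varepsilon)$, and letting $\varepsilon\to0$ exhibits $(i^!D)\varphi$ as a norm limit of compact operators, hence compact. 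Taking $\varphi\equiv1$ when $X_G=\emptyset$ gives the final assertion.

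The step I expect to be the main obstacle is the transverse estimate: showing that integrating the family of Fourier integral operators $A_g$ over the region where orbits are transverse to $X$ actually produces a smoothing operator. This is delicate because each individual $A_g$ is only an FIO, not smoothing, so the entire gain comes from the $G$-averaging; one must make the non-stationary phase argument uniform over the compact parameter space $G$ and verify that orbit transversality is exactly the non-degeneracy hypothesis it requires. By contrast, the tangential region $\widetilde X_G\setminus X_G$, which looks the most dangerous, is dispatched cleanly and quantitatively by Condition~\ref{cond1} through the volume bound above.
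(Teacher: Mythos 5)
Your treatment of the tangential region is sound and coincides with the paper's own argument: the choice $Z=\widetilde X_G\cap\supp\varphi$, the splitting of the group integral over $G_\varepsilon$ and $G\setminus G_\varepsilon$, the norm bound $C\vol(G_\varepsilon)$, and pseudolocality for $g\notin G_\varepsilon$ are exactly the paper's second step. The gap is in the transverse region, i.e.\ in your first summand. Your claim that $G$-averaging over the set where orbits meet $X$ transversally produces a \emph{smoothing} operator is false, and the proposed mechanism cannot work: the phase $\sigma_g(x')\cdot\eta_s$ always has stationary points on the support of the amplitude. Its gradient in $(g,\eta_s)$ vanishes exactly when $\sigma_g(x')=0$ (i.e.\ $gx'\in X$) and $(\partial_g\sigma_g)^{T}\eta_s=0$; transversality makes $\partial_g\sigma_g$ surjective onto the normal space and hence forces $\eta_s=0$ at such points, but it does not remove them --- in particular $(g,\eta_s)=(e,0)$ is always stationary, since the source point $x'$ itself lies on $X$. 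Transversality is therefore \emph{non-degeneracy of the stationary points}, not absence of stationary points: non-degenerate stationary phase yields a finite gain of order in the tangential frequencies, not rapid decay, and the kernel keeps genuine singularities at the pairs $(gx',x')$ with $gx'\in X$, carried by covectors conormal to the orbit with nonzero component along $TX$. A concrete check: for $M=\mathbb{R}^2$, $X$ the $x$-axis, $G=\mathbb{S}^1$ acting by rotations, $D_g=\Delta^{-1}$ and source points away from the origin, the Fourier-side kernel of $i^*\Delta^{-1}\bigl(\int_{|\theta|<\delta}T_\theta\,d\theta\bigr) i_*$ is $\int_{|\theta|<\delta}\frac{|\sin\theta|\,d\theta}{\xi^{2}\sin^{2}\theta+(t-\xi\cos\theta)^{2}}$, which diverges logarithmically on the diagonal $t=\xi$; this operator is not smoothing, it merely has order about $-3/2$ instead of the generic $d+\nu=-1$.

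That finite gain is precisely what the paper proves and what your argument is missing. The paper first cuts $G$ into small pieces and conjugates by $T_{g_j}$ so that only a neighborhood of the identity remains (this also disposes of the crossings, possibly tangential, of an orbit with $X$ away from the source point, which would defeat any global-in-$G$ non-stationary-phase argument); then it writes the integral near the identity as an iterated integral and proves the key Lemma: the one-dimensional integral $\int_{-\varepsilon}^{\varepsilon}D'_aT_{\exp(ah)}i_*\,da$ along a direction $h$ transverse to $X$ maps $H^s(X)\to H^{s-d-(\nu-1)/2}(M)$, a gain of exactly $1/2$ derivative. The proof is an explicit integration by parts in $a$, whose boundary terms give only $(1+|\tau|)^{-1}$ decay --- this is why the gain is $1/2$ and not more --- followed by elementary Sobolev estimates; compactness of the trace then follows from this fixed positive gain together with the compact Sobolev embedding, after composing with the bounded operator $i^*$ (note the paper works with $Di_*$, so that singularities on the target side are irrelevant). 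Your proof becomes correct if you replace the smoothing claim for the first summand by such a quantitative finite-gain lemma, uniform over the compact set of relevant $g$.
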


\begin{proof}

1.  Consider the operator
\begin{equation}\label{eq-tr1}
 Di_*=\int_G    D_gT_g i_* dg.
\end{equation}
Note that there is no restriction operator in this expression. We claim that this operator (and hence the trace  
 $i^*Di_*$) is localized on the subset  $\widetilde X_G$.

To prove this statement, we fix an arbitrary point  $x_0\in X\setminus\widetilde X_G $ and show that the operator 
 \eqref{eq-tr1} is compact on the subspace of functions supported in a neighborhood of   $x_0$. 
 To this end, we decompose the integral   \eqref{eq-tr1} over  $G$ into a finite sum of integrals over small neighborhoods
 and move the shift operator at an arbitrary point in each neighborhood outside   the integral sign. This enables us without loss of
  generality to pass to an operator of the form  
\eqref{eq-tr1}, where the integration is carried out  over a small neighborhood of the identity in $G$.
Further, we identify this neighborhood  with a neighborhood $U$ of zero in the Lie algebra denoted by   $\mathcal{G}$.
By the assumption,  $x_0\in X\setminus\widetilde X_G $, i.e., at $x_0$ the orbit  $Gx_0$ is not tangent to   $X$.  Hence, there
exists a vector   $h\in \mathcal{G}$   such that the corresponding vector field on   $M$ is not tangent to  $X$ at  $x_0$  (and also in a neighborhood of this point   by continuity , see Fig.~\ref{fig1}). 

\begin{figure}
 \begin{center}
  \includegraphics[width=6cm]{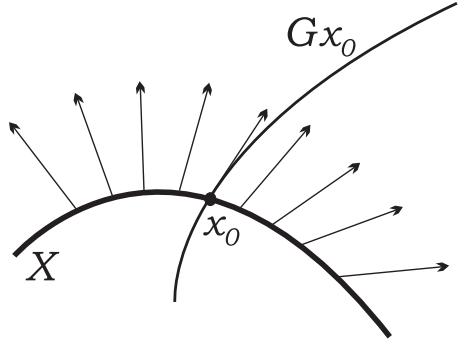} \end{center}\caption{A neighborhood of a point $x_0$,  at which the orbit is not tangent to $X$.}\label{fig1}
\end{figure} 

Then the integral \eqref{eq-tr1} (recall that this integral is actually only over $U$) can be considered as an iterated integral:
first along $h$ and then over directions transverse to  $h$.  Let us show that the integral  in the direction of $h$ 
\begin{equation}\label{eq-opa1}
\int _{-\varepsilon}^\varepsilon D'_a T_{\exp(ah)}i_*da,\qquad \text{ where }D'_a\equiv D_{\exp a}\text{ for simplicity},
\end{equation}
is a compact operator. 
\begin{lemma}
The operator \eqref{eq-opa1} acts continuously in the spaces   $H^s(X)\to H^{s-(d+(\nu-1)/2)}(M)$.
In particular, it is compact as an operator of order   $d+\nu$.
\end{lemma}
\begin{proof}The idea of the proof is that integration along   $h$ is an integration in a transverse direction to   $X$, 
and, when we perform it, there disappears    {\em one }  of the $\delta$-functions, which enter in the coboundary operator,
hence, we get the summand $-1/2$ in the  formula given above.  Let us give a detailed proof.

1. In a neighborhood of $x_0$ we introduce coordinates $x,y,t$ in $M$, in which  $X$ 
is determined by the equations $y=0,t=0$,  while the action of an element $\exp(ah)$ with $|a|$ small  on $M$  is equal to  $\exp(ah)(x,y,t)=(x,y,t+a)$.
The operator \eqref{eq-opa1} is denoted by $A$ for simplicity and we write it explicitly as  
\begin{multline}\label{eq-m1}
Au= \int_{-\varepsilon}^\varepsilon D'_a u(x)\delta(y)\delta(t-a)da=\\
=\iiint e^{i(x\xi+y\eta+t\tau)}\Bigl[\int_{-\varepsilon }^\varepsilon e^{-i\tau a}\sigma(D'_a)(x,y,t,\xi,\eta,\tau)da\Bigr] \widetilde u(\xi)d\xi d\eta d\tau.
\end{multline}
Here the coordinates $\xi,\eta,\tau$ are dual to $x,y,t$, while  $\sigma(D_a')$ is the symbol of the pseudodifferential operator $D_a'$, and
$ \widetilde u(\xi)$ stands for the Fourier transform of $u(x)$

2. Given $u\in H^s(X)$, we have to show that $Au\in H^{s'}(M)$, where $s'=s-d-\nu/2+1/2$. 
Indeed, integrating by parts the expression in the square brackets in   \eqref{eq-m1}, we obtain   the estimate 
\begin{equation}\label{est1}
\left|\int_{-\varepsilon }^\varepsilon e^{-i\tau a}\sigma(D_a)(x,y,t,\xi,\eta,\tau)da\right|\le C (1+|\tau|)^{-1}(1+|\xi|+|\eta|+|\tau|)^{d}.
\end{equation}
Here $C$ is some constant. Hence, we have
\begin{multline}
 \|Au\|^2_{s'}\le C \iiint\widetilde u(\xi)^2(1+|\tau|)^{-2}(1+|\xi|+|\eta|+|\tau|)^{2d+2s'} d\xi d\eta d\tau\le \\
\le C \iint\widetilde u(\xi)^2(1+|\tau|)^{-2}(1+|\xi|+|\tau|)^{2d+2s'+\nu-1} d\xi  d\tau\le C \int\widetilde u(\xi)^2(1+|\xi|)^{2s} d\xi=\|u\|^2_s.
\end{multline}
Here the first inequality follows from the properties of the Fourier transform, the second is obtained by integration over $\eta$ and using the spherical coordinates $\eta=(1+|\xi|+|\tau|)r\omega$; the third inequality is obtained using the change of variables $1+|\tau|=|\xi|p$ as follows
\begin{multline}
\int_{\mathbb{R}}(1+|\tau|)^{-2}(1+|\xi|+|\tau|)^{2d+2s'+\nu-1}  d\tau=
2|\xi|^{-2+2d+2s'+\nu-1+1}\int^\infty_{|\xi|^{-1}}  p^{-2} (1+p)^{2d+2s'+\nu-1}dp\\
=C|\xi|^{s-1}\times
\left(\begin{array}{ll} |\xi| & \text{if }|\xi|>1 \vspace{1mm} \\ |\xi|^{1-s} & \text{if }|\xi|<1 \end{array}\right)\le
 C(1+|\xi|)^{2s}.
\end{multline}
The proof of the lemma is now complete.
\end{proof}

2.  Let us show that the trace  $i^*Di_*$ is indeed supported on the subset   $X_G\subset \widetilde X_G$. 
To this end, we consider an arbitrary function  
$\varphi\in C^\infty(X)$, which vanishes in a neighborhood of $X_G$. We claim that the operator 
$$
i^*Di_*\varphi:H^s(X)\longrightarrow H^{s-d-\nu}(X)
$$ 
is compact. Indeed, let us use Condition~\ref{cond1} and take $Z$ equal to $\widetilde X_G\cap \supp \varphi$. 

Consider the decomposition
\begin{equation}\label{eq-tr2}
i^*Di_*\varphi =\int_{G_\varepsilon}  i^*D_g T_g i_*\varphi  dg+\int_{G\setminus G_\varepsilon} i^*D_g T_g i_*\varphi  dg.
\end{equation}
Here the first integral has a small norm (since $\vol( G_\varepsilon)\to 0$ as $\varepsilon\to 0$), 
while the second integral is, on the one hand,  (by   Item 1   above) supported on $Z $, 
and  on the other hand, on this set it is compact by locality  
(indeed, if  $x\in U_\varepsilon$ and $g\in G\setminus G_\varepsilon$, then $gx\in M\setminus X$, 
hence $ i^*D_g T_g i_*\varphi $ is compact).   As $\varepsilon\to 0$,  it  follows from the decomposition
\eqref{eq-tr2} into a sum of an operator with small norm and a compact operator that the operator on the left hand side of the equality
is compact.  

The proof of the localization theorem is now complete.  
\end{proof}

\section{Examples (geometry)}\label{par4}

Let us give examples of manifolds and group actions and apply the localization theorem (Theorem~\ref{prop1}) to them.

\paragraph{Rotations of  lines in the plane.} $M=\mathbb{R}^2, X=\mathbb{R}^1,$ while $G=\mathbb{S}^1$ is the group
of rotations around the origin. Here there are two possibilities, depending on whether the line 
  $X$ passes through the origin or not. If the line passes through the origin, then the traces of operators
  are supported at the fixed point   $X_G=\widetilde X_G=\{A\}$ 
(see Fig.~\ref{fig2}, Item 1). If the line does not pass through the fixed point,
then the traces of operators are all compact.   
More precisely, each such line has a unique point  $\widetilde X_G=A$, whose orbit touches $X$, 
while there are no fixed points: $X_G=\emptyset$ (see Fig.~\ref{fig2}, Item~2). Meanwhile, Condition~\ref{cond1} is 
satisfied (as $U_\varepsilon$ we can take an interval of length $\varepsilon$ with center at $A$).

\paragraph{Shifts of the plane.} $M=\mathbb{R}^2, X=\mathbb{S}^1,$ while $ G=\mathbb{R}$ is the group of translations
(see Fig.~\ref{fig2}, Item~3). This example is similar to the previous one, namely, the orbits are tangent to  $X$ at the points denoted by $A$ and $B$,
while there are no fixed points: $\widetilde X_G=\{A\}\cup \{B\}$, $X_G=\emptyset$. Hence, traces of operators 
on   $X$ are compact  (Condition~\ref{cond1} is satisfied in this case). On Fig.~\ref{fig2}, Item~4 we consider
and example, in which our Condition~\ref{cond1} is not satisfied
(here $X_G=\emptyset$, while  $\widetilde X_G$ consists of the horizontal intervals). 
\begin{figure}
 \begin{center}
  \includegraphics[width=10cm]{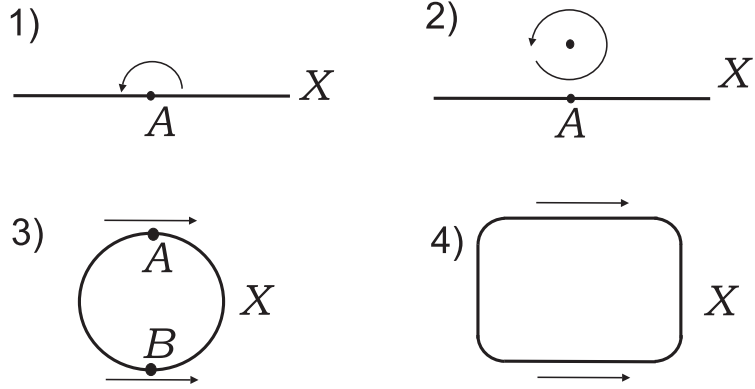} \end{center}\caption{Rotations and shifts of the plane.}\label{fig2}
\end{figure} 

\paragraph{Rotations of lines in $\mathbb{R}^3$.} $M=\mathbb{R}^3, X=\mathbb{R}^1,$ while $ G=\mathbb{S}^1$ is the group of rotations
about the  $OZ$-axis, while the line  $X$ and the axis of rotation intersect and form angle $\alpha$ 
(see Fig.~\ref{fig3}, Item~1).  In this case traces of $G$-operators are localized at the 
intersection of the two lines, except the case, when   $\alpha=0$ (more precisely, for $\alpha\ne 0$ we have $\widetilde X_G=X_G=\{A\}$).

\paragraph{Rotations of circles in $\mathbb{R}^3$.} $M=\mathbb{R}^3, X=\mathbb{S}^1,$ while $ G=\mathbb{S}^1$ is the group
of rotations about the $OZ$-axis  and the circle $X$ touches the axis of rotation (see Fig.~\ref{fig3}, Item 2).   
In this case the trace of operators is localized at the point, where the circle meets the axis of rotation.  This
degenerate case is interesting, since the trace is also localized at a one-point set and there arizes the question about the nature of this operator.  

\paragraph{Rotations of planes in $\mathbb{R}^3$.} $M=\mathbb{R}^3, X=\mathbb{R}^2,$   $ G=\mathbb{S}^1$ is the group
of rotations around the $OZ$-axis, while  the normal to the plane   $X$ and the axis of rotation 
form an angle  equal to  $\alpha$ (see Fig.~\ref{fig3}, Item~3). 
In this case   traces of $G$-operators are localized at the point of intersection
of the plane with the axis of rotation   
(more precisely, $\widetilde X_G=$ is the line $l$, while  $X_G=\{A\}$). 
\begin{figure}
 \begin{center}
  \includegraphics[width=14cm]{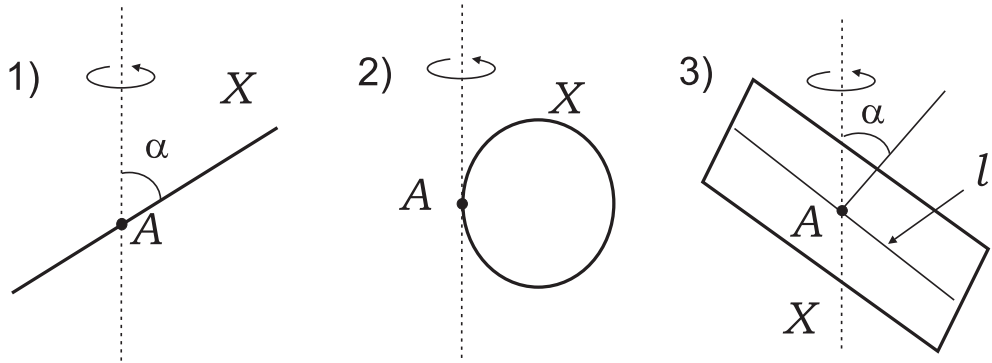} \end{center}\caption{Rotations in $\mathbb{R}^3$.}\label{fig3}
\end{figure} 

\paragraph{Rotations and shifts of spheres in $\mathbb{R}^3$.} $M=\mathbb{R}^3, X=\mathbb{S}^2,$ while $ G=\mathbb{S}^1\times\mathbb{R}^1$ is the group generated by rotations about OZ and shifts along this axis. The sphere
 $X$ has center at the origin  (see Fig.~\ref{fig4}, Item~1).  
 In this case   traces of $G$-operators are   compact.  Indeed, the orbits touch
the spheres at the equator $\widetilde X_G=\mathbb{S}^1$, while none of the orbits
is contained in the sphere and thus  $X_G=\emptyset$ 
(Condition~\ref{cond1} is satisfied in this case).

\paragraph{Arbitrary rotations of a line in $\mathbb{R}^3$.}  $M=\mathbb{R}^3, X=\mathbb{R}^1,$ while $G=SO(3)$ is the group of all rotations about the origin. Here  the line   $X$ passes through the origin.  
In this case   traces of $G$-operators are localized at the origin  (see Fig.~\ref{fig4}, Item~2).  
\begin{figure}
 \begin{center}
  \includegraphics[width=8cm]{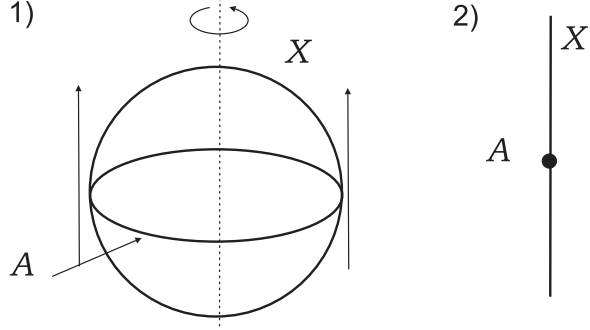} \end{center}\caption{Actions of Lie groups 1) $G=\mathbb{S}^1\times\mathbb{R}$ and 2) 
  $SO(3)$ in $\mathbb{R}^3$.}\label{fig4}
\end{figure}

\section{Examples (analysis)}\label{par5}

In this section we consider examples and
study the nature of traces of $G$-operators in a neighborhood of the set, on which they are localized.   

\paragraph{Example 1. } Let a submanifold  $X\subset M$ be $G$-invariant, i.e. we have $X=X_G=\tilde X_G$. 
In this case traces of  $G$-operators on $M$ are localized
on the entire manifold  $X$ by Theorem~\ref{prop1}.  It turns out, that in this invariant situation 
the trace is actually, a
$G$-operator on $X$ with respect to the restriction of the action of  $G$ on
$X$ and the corresponding shift operators
$$
T'_g : H^s(X)\longrightarrow H^s(X), \qquad T'_gu(x)=u(g^{-1}x).
$$
More precisely, the following statement is true. 
\begin{proposition}
 The trace of a $G$-operator $D$ (see~\eqref{eq-65}) on a $G$-invariant submanifold   $X$ is a $G$-operator on $X$,
i.e., we have
\begin{equation}\label{eq-d1}
 i^!(D)=\int_G D'_g T'_g dg,
\end{equation}
where $D'_g$ is a smooth family of pseudodifferential operators on $X$.
\end{proposition}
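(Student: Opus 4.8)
The plan is to expand the trace into its defining integral and then commute the shift operators through the coboundary and restriction maps, so that each integrand collapses to the smooth--theory trace of a single $\psi$DO. Since $D=\int_G D_g T_g\,dg$, linearity of $i^!$ gives
\[
i^!(D)=i^* D i_* = \int_G i^* D_g T_g i_*\, dg ,
\]
and it suffices to rewrite the single-$g$ integrand $i^* D_g T_g i_*$ in the form $D'_g T'_g$ with $D'_g$ a $\psi$DO on $X$.

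First I would exploit the $G$-invariance of $X$. Because $gX=X$ and the action on $X$ is the restriction of the action on $M$, the embedding is equivariant, $g\circ i=i\circ g$, and hence the restriction operator intertwines the two shift actions: $i^* T_g = T'_g i^*$. To control the coboundary operator I would fix a $G$-invariant volume form on $M$ (obtained by averaging over the compact group $G$), which also induces a $G$-invariant measure on $X$; with respect to these measures the shifts $T_g,T'_g$ are unitary, so $T_g^*=T_{g^{-1}}$. Taking adjoints of $i^* T_g = T'_g i^*$ and using that $i_*$ is, by definition, conjugate to $i^*$, I obtain the dual intertwining relation $T_g i_* = i_* T'_g$. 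Substituting this and pushing the shift to the right yields
\[
i^* D_g T_g i_* = i^* D_g i_* T'_g = D'_g T'_g,\qquad D'_g := i^* D_g i_* .
\]

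Now $D'_g=i^* D_g i_*$ is precisely the trace of the single pseudodifferential operator $D_g$ on $X$ in the smooth (trivial-group) theory, which by the results of \cite{NoSt1,NoSt2,Ster1} recalled in the introduction is again a $\psi$DO on $X$ (of order $d+\nu$, consistent with the order bookkeeping in \eqref{eq-43}). Integrating over $G$ then gives exactly the asserted representation $i^!(D)=\int_G D'_g T'_g\, dg$. The remaining point is that $\{D'_g\}$ is a \emph{smooth} family of $\psi$DOs; I would deduce this from the smoothness of the given family $\{D_g\}$ together with the continuity of the smooth-theory trace map on symbols, since the symbol of $i^* D_g i_*$ is produced from $\sigma(D_g)$ by an integration over the conormal covariable and therefore depends smoothly on $g$.

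The hard part will be the equivariance of the order-shifted coboundary operator $i_*$ in the second step: one must verify that the Jacobian factors arising when $T_g$ is commuted past $i_*$ do not spoil the clean intertwining relation. Passing to $G$-invariant measures is exactly what removes those factors and lets the duality argument $\bigl(i^* T_g=T'_g i^*\bigr)^{*}\Rightarrow\bigl(T_g i_*=i_* T'_g\bigr)$ go through; a secondary technical point is to confirm that the Sobolev-order shifts built into the definitions of $i^*$ and $i_*$ are themselves $G$-invariant, which again follows from the invariance of the underlying geometric data and requires no conjugation of the $\psi$DOs $D_g$ themselves.
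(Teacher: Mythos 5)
Your proposal is correct and follows essentially the same route as the paper: reduce by linearity to a single integrand $i^*D_gT_gi_*$, commute the shift past the coboundary operator using $G$-invariant data obtained by averaging over the compact group, and identify $D'_g=i^*D_gi_*=i^!(D_g)$ as a $\psi$DO via the smooth (trivial-group) trace theory. The only cosmetic difference is how the key relation $T_gi_*=i_*T'_g$ is verified: the paper writes $i_*u=u\otimes\delta_X$ with a $G$-invariant $\delta$-function supported on $X$, while you dualize the equivariance relation $i^*T_g=T'_gi^*$ with respect to $G$-invariant measures, which is the same invariance argument in adjoint form.
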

\begin{proof}
Let us compute the trace of the integrand in \eqref{eq-65}. We have 
$$
i^!(D_gT_g)u(x)=i^*D_g T_g (u(x)\otimes \delta_X) =i^* D_g (T'_g u(x)\otimes\delta_X)=
(i^*D_g i_*)(T'_g u(x))= i^!(D_g)T'_g u(x).
$$
Here we used the fact that $G$ is compact and chose a   $G$-invariant $\delta$-function supported on $X$.
Hence, integration over   $G$ gives the desired equality \eqref{eq-d1}, where $D'_g=i^!(D_g)$ is a pseudodifferential 
operator on the submanifold.
 
The proof of the proposition is now complete. 
\end{proof}

Of course,  a submanifold is not $G$-invariant in general. Below we study traces in such situations.  

\paragraph{Example 2. }

Let $X\subset \mathbb{R}^3$ be the plane
$$
-x\sin\alpha+z\cos\alpha=0.
$$
A basis in this plane is given by  the vectors     $e_1=(\cos\alpha,0,\sin\alpha)$, $e_2=(0,1,0)$. A normal vector
is equal to  $e_3=(-\sin\alpha,0,\cos\alpha).$
The coordinates in the basis $e_1,e_2,e_3\in \mathbb{R}^3$ are denoted by $(u,v,w)$. These coordinates are related
with the coordinates   $x,y,z$ as
\begin{equation}\label{change1}
(x,y,z)=( u\cos\alpha- w\sin\alpha, v, u\sin\alpha +w\cos\alpha ).
\end{equation} 

In $\mathbb{R}^3$, we consider the scalar $G$-operator
\begin{equation}\label{eq-op3}
D=\Delta^{-1}\int_{\mathbb{S}^1}T_\varphi d\varphi,
\end{equation} 
where $\Delta$ stands for the Laplacian, while $T_\varphi $ is the shift
operator associated with the group of rotations about the  
$OZ$-axis:
$$
(T_\varphi f)(x,y,z)=f (x\cos\varphi-y\sin\varphi,x\sin\varphi+y\cos\varphi,z).
$$
Let us study the trace
\begin{equation}\label{eq-op3a}
i^!(D)=  i^* \left(\Delta^{-1}\int_{\mathbb{S}^1}T_\varphi d\varphi \right) i_*:H^s(X)\to H^{s+1}(X),\quad s\in(-1,0),
\end{equation} 
of $D$ on $X$.

\begin{proposition}
The trace of operator \eqref{eq-op3} is localized at the point $(0,0,0)$ of intersection of the plane $X$ with
the axis of rotation. 
\end{proposition}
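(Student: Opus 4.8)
The plan is to obtain the statement as a direct consequence of the localization theorem (Theorem~\ref{prop1}): once I identify the set $X_G$ for this concrete geometry and check that Condition~\ref{cond1} holds, the theorem immediately yields that the trace is localized on $X_G$. Thus the whole argument reduces to two computations of an elementary geometric nature. I first observe that for the operator \eqref{eq-op3} the family $D_\varphi\equiv\Delta^{-1}$ is independent of $\varphi$ and is an honest $\psi$DO of order $d=-2$, with codimension $\nu=1$, so the abstract hypotheses of Theorem~\ref{prop1} are in force.

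The first computation determines $X_G$. The orbit of a point $(x_0,y_0,z_0)$ under the rotations $T_\varphi$ about $OZ$ is the horizontal circle of radius $r=\sqrt{x_0^2+y_0^2}$ at height $z_0$. Substituting its parametrization $(r\cos\varphi,r\sin\varphi,z_0)$ into the defining function $-x\sin\alpha+z\cos\alpha$ of $X$, I see that the orbit lies in $X$ iff $-r\cos\varphi\sin\alpha+z_0\cos\alpha=0$ for all $\varphi$, which forces $r\sin\alpha=0$ and $z_0\cos\alpha=0$. For $\alpha\in(0,\pi/2)$ this gives $r=0$, $z_0=0$, i.e. the orbit degenerates to the single point $(0,0,0)$ on the axis; hence $X_G=\{(0,0,0)\}$. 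For later use I also record $\widetilde X_G$: the generator of the rotation at $(x_0,y_0,z_0)$ is the vector $(-y_0,x_0,0)$, and requiring it to be orthogonal to the normal $e_3=(-\sin\alpha,0,\cos\alpha)$ gives $y_0\sin\alpha=0$, so $\widetilde X_G$ is the line $l=X\cap\{y=0\}$ through the origin in the direction $e_1$, and $\widetilde X_G\setminus X_G=l\setminus\{0\}$.

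The second computation verifies Condition~\ref{cond1}. Given a closed set $Z\subset l\setminus\{0\}$, its distance to the origin is positive, so along $Z$ the coordinate $x_0$ is bounded away from $0$. Taking $U_\varepsilon$ to be the $\varepsilon$-neighborhood of $Z$ in $X$, I evaluate the defining function on $T_\varphi q$ for $q=(a,b,c)\in X$ near $Z$; using $c\cos\alpha=a\sin\alpha$ one gets $\sin\alpha\,[\,a(1-\cos\varphi)+b\sin\varphi\,]$. Since $a$ is bounded away from $0$ while $|b|\le\varepsilon$ on $U_\varepsilon$, this quantity vanishes only for $\varphi$ in an interval of length $O(\varepsilon)$ about $0$, whence $\vol(G_\varepsilon)=O(\varepsilon)\to0$ as $\varepsilon\to0$. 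With Condition~\ref{cond1} established, Theorem~\ref{prop1} applies and gives localization of the trace on $X_G=\{(0,0,0)\}$.

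I expect the volume estimate in Condition~\ref{cond1} to be the only delicate point. Along $l$ the orbits are \emph{tangent} to $X$ rather than transverse, so a priori the intersection $gU_\varepsilon\cap X$ could persist for a range of angles larger than $O(\varepsilon)$; the explicit expression $a(1-\cos\varphi)+b\sin\varphi$ shows the tangency is of clean quadratic type and the linear term $b\sin\varphi$ controls the width, which is what makes the estimate go through. This is also where the excluded degenerate values of $\alpha$ would break the argument (at $\alpha=0$ the plane is $G$-invariant and $X_G=X$, while at $\alpha=\pi/2$ the axis lies in $X$ and $X_G$ becomes that whole line), so the statement is implicitly for $\alpha\in(0,\pi/2)$.
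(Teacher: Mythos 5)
Your proof is correct and follows essentially the same route as the paper: compute $X_G=\{(0,0,0)\}$ and $\widetilde X_G=\{y=0\}\cap X$, then invoke Theorem~\ref{prop1}. The only difference is that the paper's proof consists of exactly this two-line computation and leaves Condition~\ref{cond1} implicit, whereas you verify it explicitly (correctly, via the vanishing set of $a(1-\cos\varphi)+b\sin\varphi$ for $a$ bounded away from zero and $|b|\le\varepsilon$), which is a sound completion of the same argument rather than a different approach.
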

\begin{proof}
In this case a direct computation shows that   
$  \widetilde{X}_{\mathbb{S}^1}=\{y=0, -x\sin\alpha+z\cos\alpha=0 \}$ is a line,
while  ${X}_{\mathbb{S}^1}=\{(0,0,0)\}$ is a point. Hence, by Theorem~\ref{prop1}
the trace is localized at $(0,0,0)$. 
\end{proof}
So, the trace is localized at the fixed point of the group action. Let us study its structure by freezing
the coefficients of the operator at this point. Below, it will be more convenient to 
work with zero-order operators. We make reduction to this case by taking products of our operator with appropriate
powers of the Laplacian   $\Delta_X$ on $X$, i.e., from operator \eqref{eq-op3}  we pass to the operator
\begin{equation}\label{eq-77}
\Delta_X^{1/2} i^*\left( \Delta^{-1}\int_{\mathbb{S}^1}T_\varphi d\varphi\right) i_*
 :H^s(\mathbb{R}^2)\to H^s(\mathbb{R}^2).
\end{equation}

A direct computation shows that in the dual space with respect to the Fourier transform operator  \eqref{eq-77} 
is written as an integral operator as
\begin{multline}\label{eq1}
f(s,t)\longmapsto (u^2+v^2)^{1/2}\int_{\mathbb{R}}dw\int_0^{2\pi} \frac{d\varphi}{ u^2+v^2+w^2}\times\\
f\Bigl(u(\cos^2\alpha\cos\varphi+\sin^2\alpha)+v\cos\alpha\sin\varphi+w\sin\alpha\cos\alpha(1-\cos\varphi),
-u\cos\alpha\sin\varphi+v\cos\varphi+w\sin\alpha\sin\varphi\Bigr):\\
: \widetilde H^s(\mathbb{R}^2_{s,t})\longrightarrow \widetilde H^s(\mathbb{R}^2_{u,v}).
\end{multline}
Here we used the fact that operators on the physical space are transformed to the following operators on the dual space:  
\begin{itemize}
\item the coboundary operator  $i_*$ is transformed to the operator 
$$
\pi^*f(x,y,z)=f(x\cos\alpha +z\sin\alpha ,y)
$$
where $\pi:\mathbb{R}^3\to \mathbb{R}^2$ denotes the projection;
\item  the rotation operator ${T}_\varphi$  is transformed to the rotation operator
$$
\widetilde{T}_\varphi f(x,y,z)=f (x\cos\varphi+y\sin\varphi,-x\sin\varphi+y\cos\varphi,z);
$$
\item the boundary operator $i^*$ is transformed to the operator of integration with respect to   $w$:
$$
\pi_*f(u,v)=\int_\mathbb{R} f( u\cos\alpha- w\sin\alpha, v, u\sin\alpha +w\cos\alpha )dw
$$
(here we used the change of variables  \eqref{change1});
\item replacing the operators in the composition \eqref{eq-77} by the corresponding operators on the dual space, we
obtain precisely the operator   \eqref{eq1};
\item the space $\widetilde H^s(\mathbb{R}^2_{u,v})$ is the closure of the set of smooth compactly-supported functions with respect to the norm  
$$
\|f\|_s^2=\int_{\mathbb{R}^2} |f(u,v)|^2 (1+u^2+v^2)^s dudv.
$$
\end{itemize}

Below we use polar coordinates on   $X$: 
$$
s=\rho \cos\psi, t=\rho \sin\psi \quad \text{and} \quad u=r\cos\omega, v=r\sin\omega.
$$

In the integral in~\eqref{eq1} we make the following change of variables   $(w,\varphi)\mapsto (\rho,\psi)$:
\begin{equation}\label{eq-change1}
\begin{array}{llll}
u(\cos^2\alpha\cos\varphi+\sin^2\alpha)& +v\cos\alpha\sin\varphi&+w\sin\alpha\cos\alpha(1-\cos\varphi) &= 
\rho\cos\psi,\\
-u\cos\alpha\sin\varphi&+v\cos\varphi&+w\sin\alpha\sin\varphi&=\rho\sin\psi.
\end{array}
\end{equation}
It turns out that this change of variables is one-to-one, while the inverse change is equal to  
(cumbersome computations are omitted):
\begin{equation}\label{eq-inv1}
  \begin{array}{ll}
 \displaystyle  \tg\frac\varphi 2 &\displaystyle= \frac{\rho\cos\psi -u}{(v+\rho\sin\psi)\cos\alpha},\vspace{1mm}\\
\displaystyle   w&\displaystyle=u\ctg\alpha-\frac{v\ctg\varphi}{\sin\alpha}+\frac{\rho\sin\psi}{\sin\alpha\sin\varphi}=\vspace{1mm}\\
&\displaystyle =\frac{u^2(1-2\cos^2\alpha)-2\rho u \cos\psi\sin^2\alpha+\rho^2(\cos^2\psi+\sin^2\psi\cos^2\alpha)-v^2\cos^2\alpha}{2(\rho\cos\psi-u)\cos\alpha\sin\alpha}=\vspace{1mm}\\
&\displaystyle =\frac{\sin^2\alpha(\rho\cos\psi-u)^2+\cos^2\alpha(\rho^2-r^2)}{2(\rho\cos\psi-u)\cos\alpha\sin\alpha}.
\end{array}
\end{equation}
Let us describe the geometric meaning of this change of variables. In the   $(s,t)$-plane
we have an ellipse (defined parametrically in terms of $\varphi$):
$$
s=u(\cos^2\alpha\cos\varphi+\sin^2\alpha)+v\cos\alpha\sin\varphi, \quad t=-u\cos\alpha\sin\varphi+v\cos\varphi.
$$
As  $\varphi$ increases, the corresponding point goes around the ellipse clockwise, starting from the point
$(u,v)$ (see Fig.~\ref{fig5}). Further, for each point of this ellipse  (i.e., for a given   $\varphi$) equations
\eqref{eq-change1} define a line in the  $(s,t)$-plane, which passes through this point with 
parameter   $w$ along the line (this line degenerates to a point for $\varphi=0$). A direct computation shows
that this line passes through the above mentioned point of the ellipse and 
the   point with the coordinates   $(u,-v)$.
\begin{figure}
 \begin{center}
  \includegraphics[width=9cm]{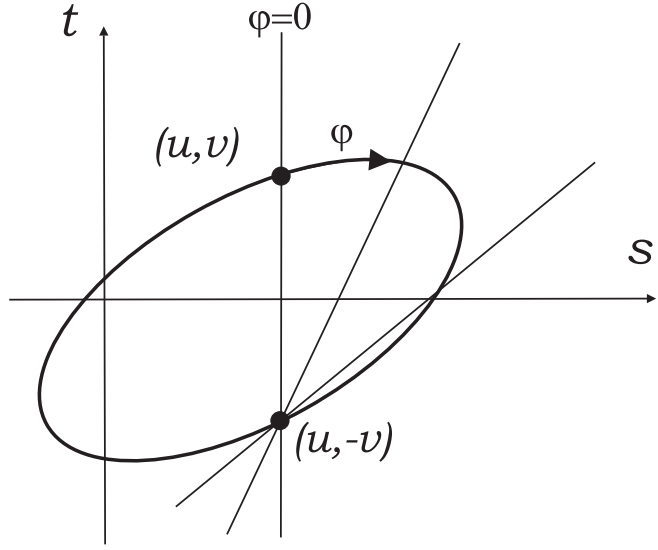} \end{center}\caption{Change of variables $(w,\varphi)\mapsto (\rho,\omega)$.}\label{fig5}
\end{figure} 

Clearly, the mapping  $(w,\varphi)\mapsto (\rho,\omega)$ is a diffeomorphism, except at the points,
which lie on the vertical line passing through the   point   $(u,-v)$ of the ellipse.  

Making the change of variables  \eqref{eq-change1}, we rewrite
the product of the differentials in  \eqref{eq1} in the new coordinates as
\begin{equation}\label{eq-dd1}
d\varphi d w=\frac{\rho d\rho d\psi}{\cos\alpha\sin\alpha ((1-\cos\varphi)(w\sin\alpha -u\cos\alpha)+v\sin\varphi)}=
\frac{\rho d\rho d\psi}{ \sin\alpha ( \rho\cos\psi-u)}.
\end{equation}
Substituting \eqref{eq-change1} and \eqref{eq-dd1} in the integral  operator \eqref{eq1}, 
we rewrite this integral operator as  
\begin{equation}\label{eq2}
f(s,t)\longmapsto  
  r\int_0^\infty d\rho\int_0^{2\pi} \frac{\rho d\psi}{ (r^2+w^2) \sin\alpha |\rho\cos\psi-u|}
f(\rho\cos\psi,\rho\sin\psi)
=\int_0^\infty  K( \rho/r)f(\rho)\frac {d\rho}\rho,
\end{equation}
where $K(\rho)$ is a family of integral operators on the circle equal to
\begin{multline}\label{eq3}
  (K(\rho)f)(\omega)=\int_0^{2\pi} \frac {\rho^2 }{(1+w^2) \sin\alpha |\rho\cos\psi-\cos\omega|}  f(\psi)d\psi=\\
=\int_0^{2\pi} \frac {4\sin \alpha \cos^2\alpha  \rho^2 |\rho\cos\psi -\cos\omega|}{ 
4  \cos^2\alpha\sin^2\alpha(\rho\cos\psi -\cos\omega)^2+\bigl(\sin^2\alpha(\rho\cos\psi-\cos\omega)^2+\cos^2\alpha(\rho^2-1)\bigr)^2 }  f(\psi)d\psi.
\end{multline}
Since the operator-function $K(\rho/r)$ in \eqref{eq2} 
is homogeneous of degree zero with respect to the pair of its arguments, operator  \eqref{eq2} 
is nothing but the Mellin convolution in the variable $r$. Hence, it is algebraized 
if we apply the Mellin transform   $\mathcal{M}_{\rho\to p }$. Here by algebraization we mean that the operator is
written as an operator of multiplication by the function
\begin{equation}\label{eq-mm1}
\widehat K(p)=\mathcal{M}_{\rho\to p }K(\rho)=\int_0^\infty \rho^p K(\rho)\frac{d\rho}\rho.
\end{equation} 
Necessary properties of this operator-function are described in the following two lemmas.  
\begin{lemma}\label{lem1}
The operator-function $K(\rho)$ ranges in integral operators with smooth kernel for all $\rho>0$ and $\rho\ne 1$, 
and its operator norm in the space   $L^2(\mathbb{S}^1)$   has the following estimates
\begin{equation}\label{eq-33}
\|K(\rho)\|=\left\{
\begin{array}{ll}
 O(\rho^2), & \text{ if }\rho<1/2,\vspace{1mm}\\
O\left( |\rho-1|^{-1/2} \right),& \text{ if }1/2<\rho<2,\vspace{1mm}\\
 O(\rho^{-1}), & \text{ if }\rho>2.
\end{array}
\right.
\end{equation}
\end{lemma}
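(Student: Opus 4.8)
The plan is to analyze the three regimes in \eqref{eq-33} directly from the explicit kernel in \eqref{eq3}, treating $K(\rho)$ as an integral operator on $L^2(\mathbb{S}^1)$ and bounding its norm via the Schur test. Recall that for an integral operator with kernel $k(\omega,\psi)$, the operator norm on $L^2(\mathbb{S}^1)$ is controlled by $\max\bigl(\sup_\omega\int_0^{2\pi}|k(\omega,\psi)|\,d\psi,\ \sup_\psi\int_0^{2\pi}|k(\omega,\psi)|\,d\omega\bigr)$. Since the kernel in \eqref{eq3} is symmetric under interchanging the roles of the arguments (up to the evident $\cos\psi\leftrightarrow\cos\omega$ symmetry in the denominator), it suffices to estimate a single one-dimensional integral $\int_0^{2\pi}|k(\omega,\psi)|\,d\psi$ uniformly in $\omega$ in each regime. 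The smoothness claim for $\rho\neq 1$ is easier: inspecting the second form of the kernel in \eqref{eq3}, the denominator $4\cos^2\alpha\sin^2\alpha(\rho\cos\psi-\cos\omega)^2+(\sin^2\alpha(\rho\cos\psi-\cos\omega)^2+\cos^2\alpha(\rho^2-1))^2$ is strictly positive whenever $\rho\neq 1$ (the term $\cos^2\alpha(\rho^2-1)$ cannot be cancelled since it is the only contribution not vanishing when $\rho\cos\psi=\cos\omega$), so the kernel is a smooth function of $(\omega,\psi)$ for each fixed $\rho\neq1$.

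First I would dispose of the two \emph{non-resonant} regimes $\rho<1/2$ and $\rho>2$. In both cases $|\rho\cos\psi-\cos\omega|$ is bounded away from zero (by $1-\rho\geq 1/2$ when $\rho<1/2$, and by $\rho-1>1$ when $\rho>2$), so the first, unresolved form of the kernel $\rho^2/\bigl((1+w^2)\sin\alpha\,|\rho\cos\psi-\cos\omega|\bigr)$ is the convenient one. The factor $(1+w^2)^{-1}$ is harmless and bounded by $1$, and using the last expression for $w$ in \eqref{eq-inv1} one sees $1+w^2\geq 1$ always; more usefully, for $\rho>2$ the numerator of $w$ grows like $\rho^2$ so $(1+w^2)^{-1}=O(\rho^{-4})$ after the $|\rho\cos\psi-\cos\omega|$ factor is accounted for, which combined with the prefactor $\rho^2$ yields the claimed $O(\rho^{-1})$ decay. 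For $\rho<1/2$ the prefactor $\rho^2$ is already the dominant small quantity and the remaining integral is uniformly bounded, giving $O(\rho^2)$. In each case the Schur integral is then a routine bound of $\int_0^{2\pi}d\psi/|\rho\cos\psi-\cos\omega|$, which is finite and bounded uniformly in $\omega$ precisely because the denominator stays away from zero.

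The hard part will be the \emph{resonant} regime $1/2<\rho<2$, where as $\rho\to1$ the quantity $\rho\cos\psi-\cos\omega$ can vanish and the kernel develops a singularity, producing the $|\rho-1|^{-1/2}$ blow-up. Here one must use the resolved (second) form of the kernel in \eqref{eq3}: near the locus $\rho\cos\psi=\cos\omega$ the denominator behaves like $\bigl(\sin^2\alpha\,(\rho\cos\psi-\cos\omega)^2+\cos^2\alpha(\rho^2-1)\bigr)^2$ up to the smaller mixed term, so writing $\sigma=\rho\cos\psi-\cos\omega$ the kernel is comparable to $|\sigma|\big/\bigl(\sigma^2+c(\rho^2-1)\bigr)^2$ with $c=\cos^2\alpha/\sin^2\alpha$, times bounded factors. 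The decisive one-dimensional estimate is therefore of the form $\int |\sigma|\,(\sigma^2+\beta^2)^{-2}\,d\sigma$ with $\beta^2\sim|\rho^2-1|\sim|\rho-1|$; substituting $\sigma=\beta\tau$ shows this integral scales like $\beta^{-2}\sim|\rho-1|^{-1}$, but the change of variables from $\psi$ to $\sigma$ contributes a Jacobian $|d\sigma/d\psi|=\rho|\sin\psi|$ that \emph{vanishes} exactly at the turning points $\psi=0,\pi$ where $\sigma$ is extremal, and it is this vanishing Jacobian—equivalently the $|\sin\psi|^{-1}$ factor when one integrates in $\sigma$—that halves the exponent and yields the square-root rate $|\rho-1|^{-1/2}$ rather than $|\rho-1|^{-1}$. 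I would make this precise by splitting the $\psi$-integral into a neighborhood of the resonance (where I linearize $\sigma$ in $\psi$, extract the $|\sin\psi|$ behavior, and obtain the $|\rho-1|^{-1/2}$ contribution) and its complement (where the denominator is bounded below and the integral is $O(1)$), then invoking the Schur test as before; the symmetry of the kernel again lets me treat only the integral over $\psi$. The main technical obstacle is controlling the geometry of the resonance set $\{\rho\cos\psi=\cos\omega\}$ uniformly in $\omega$ as $\rho$ crosses $1$, ensuring the turning-point analysis is uniform and that the constant in $O(|\rho-1|^{-1/2})$ does not degenerate as $\omega$ approaches the critical angles.
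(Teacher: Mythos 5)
Your overall strategy (Schur test, three regimes, smoothness from the sum-of-squares denominator) matches the paper's, and the smoothness argument is fine, but two of your key steps are wrong.

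First, in the non-resonant regimes your opening claim is false: $|\rho\cos\psi-\cos\omega|$ is \emph{not} bounded away from zero when $\rho<1/2$ or $\rho>2$. The resonance locus $\rho\cos\psi=\cos\omega$ is nonempty for every $\rho>0$ (e.g.\ for $\rho<1/2$ take any $\omega$ with $|\cos\omega|\le\rho$), so the factor you put in the denominator can vanish. For $\rho<1/2$ you then bound $(1+w^2)^{-1}\le 1$ and call $\int_0^{2\pi}d\psi/|\rho\cos\psi-\cos\omega|$ a "routine bound", but that integral \emph{diverges} near a simple zero of $\rho\cos\psi-\cos\omega$. What saves the kernel there is precisely the factor $(1+w^2)^{-1}$: by \eqref{eq-inv1}, $w\to\infty$ like $(\rho\cos\psi-\cos\omega)^{-1}$ when $\rho\ne1$, so $(1+w^2)^{-1}$ contributes a compensating factor of $(\rho\cos\psi-\cos\omega)^2$. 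This cancellation is exactly what the resolved second form of \eqref{eq3} encodes, and it is the form the paper uses: for $\rho<1/2$ the numerator is $O(\rho^2)$ while the resolved denominator is bounded below (if $(\rho\cos\psi-\cos\omega)^2$ is small the second square is $\approx\cos^4\alpha(1-\rho^2)^2$, otherwise the first term is bounded below); for $\rho>2$ the numerator is $O(\rho^3)$ and the denominator is $\ge C\rho^4$. Your $\rho>2$ computation implicitly uses this same cancellation, which contradicts your own claim that the unresolved form suffices because the denominator "stays away from zero."

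Second, and more seriously, in the resonant regime $1/2<\rho<2$ your model kernel $|\sigma|/\bigl(\sigma^2+c(\rho^2-1)\bigr)^2$, obtained by discarding the "smaller mixed term" $4\cos^2\alpha\sin^2\alpha\,\sigma^2$, is not a valid simplification. For $\rho<1$ the discarded term is indispensable: $\sin^2\alpha\,\sigma^2+\cos^2\alpha(\rho^2-1)$ vanishes on a whole circle of $\sigma$-values, so your model kernel has a non-integrable singularity there. For $\rho>1$ the mixed term dominates the denominator throughout the range $|\rho^2-1|\lesssim|\sigma|\lesssim|\rho^2-1|^{1/2}$, and dropping it inflates the Schur integral: your model gives $\int|\sigma|(\sigma^2+\beta^2)^{-2}d\sigma\sim\beta^{-2}\sim|\rho-1|^{-1}$, which does not prove the lemma. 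Your attempt to recover the exponent via the Jacobian is backwards: the factor $|\sin\psi|^{-1}$ (or the square-root factor after substituting $\cos\omega=\cos\psi+t$) appears in the \emph{denominator} of the integrand, so it can only increase the integral, never "halve the exponent" downward. The correct mechanism, which the paper's proof implements, is the opposite: keeping the mixed term shows the kernel is $O(1/|\sigma|)$ for $|\sigma|\gtrsim|\rho-1|$, so away from the turning points the Schur integral is only $O(\ln|\rho-1|^{-1})$, and the worst case $O(|\rho-1|^{-1/2})$ arises exactly at the turning points $\psi\approx0,\pi$, where the vanishing Jacobian degrades the logarithm to a square root (the paper's estimates \eqref{eq-st2}--\eqref{eq-st3}). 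So the turning points are where the bound is \emph{lost}, not gained, and your proposal as written establishes at best $O(|\rho-1|^{-1})$.
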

\begin{proof}
1. The singularities of the Schwarz kernel of the operator  \eqref{eq3} correspond to the zeroes
of the denominator. Since this denominator is a sum of squares,
the singularities of the denominator are determined from the equations  
$$
\rho\cos\psi-\cos\omega=0,\qquad \rho^2-1=0,
$$
which are equivalent to  $\rho=1,\psi=\pm\omega$. This implies that for  $\rho\ne 1$ the denominator has no zeroes,
hence, the Schwarz kernel is smooth. The first statement in the lemma is now proved. 

2. Estimates of the integral kernel and the operator norm as $\rho\to\infty$ and $\rho\to 0$ are obtained similarly.
Namely, as $\rho\to\infty$ the numerator in \eqref{eq3} is equal to $O(\rho^3)$ and the denominator
has  a lower bound $\ge C\rho^4$, which gives us the desired estimate. Finally, as $\rho\to 0$ 
the numerator is of order $O(\rho^2)$, while the denominator is separated from zero, which gives the desired estimate.  

3. It remains to estimate the norm of operator   $K(\rho)$ as $\rho\to 1$. So, we consider  $\rho$ close but not equal to
$1$. To estimate the norm of the integral operator    $K(\rho)$, we use the Schur test   (e.g., see \cite{HaSa1}) and estimate the integrals
$$
\int_{\mathbb{S}^1} |K(\rho,\omega,\psi)|d\omega,\qquad  \int_{\mathbb{S}^1} |K(\rho,\omega,\psi)|d\psi
$$
of the kernel  $K(\rho,\omega,\psi)$ uniformly in $\omega,\psi$. Let us estimate the first of the integrals
 (the second is estimated similarly). We have
\begin{multline}\label{eq-st1}
\int_{\mathbb{S}^1} |K(\rho,\omega,\psi)|d\omega\le  C\int_{\psi-\varepsilon}^{\psi+\varepsilon}
\frac{|\rho\cos\psi -\cos\omega|d\omega}{(\rho\cos\psi -\cos\omega)^2+ \bigl(\tg^2\alpha(\rho\cos\psi-\cos\omega)^2+(\rho^2-1)\bigr)^2}\le \\
\le  C\int_{-\varepsilon_1}^{\varepsilon_2}\frac{|(\rho-1)\cos\psi -t|dt}
{
\bigl[((\rho-1)\cos\psi -t)^2+ \bigl(\tg^2\alpha((\rho-1)\cos\psi -t)^2+(\rho^2-1)\bigr)^2\bigr] \sqrt{|\sin^2\psi-2t\cos\psi-t^2|} } 
\end{multline}
(for some numbers $\varepsilon_1,\varepsilon_2\ge 0$, which are bounded uniformly in $\psi$ and $\rho$). In the 
last inequality in \eqref{eq-st1} we reduced our integral to an integral
over a small neighborhood of  the point $\psi$, since the integrand is uniformly bounded
whenever   $|\omega\pm\psi|>\varepsilon$ and is an even function. Then in the second inequality we made the change 
of variable   $\omega\mapsto t$:
$$
\cos\omega=\cos\psi+t,\qquad d\omega=\frac{\pm dt}{\sqrt{|\sin^2\psi-2t\cos\psi-t^2|}}.
$$
Then in the latter integral in   \eqref{eq-st1} we make the change of variable $t=|\rho-1|\tau$; 
then we obtain
\begin{multline}\label{eq-st2}
\int_{\mathbb{S}^1} |K(\rho,\omega,\psi)|d\omega\le  \\
\le  C\int_{-\varepsilon_1|\rho-1|^{-1}}^{\varepsilon_2|\rho-1|^{-1}}\frac{|\cos\psi \mp \tau| }
{\bigl[(\cos\psi \mp \tau )^2+ \bigl(\tg^2\alpha(\rho-1)(\cos\psi \mp \tau)^2+(\rho+1)\bigr)^2 \bigr]}\times\\
\times \frac{d\tau}{
\sqrt{|\sin^2\psi-2\tau|\rho-1|\cos\psi-\tau^2|\rho-1|^2|} } \le \\
\le  C \int_{-\varepsilon_1|\rho-1|^{-1}}^{\varepsilon_2|\rho-1|^{-1}}
\frac{d\tau}{(|\tau|+1)\sqrt{|(|\rho-1|\tau+\cos\psi+1)(|\rho-1|\tau+\cos\psi-1)|}}
\end{multline}
The second inequality here follows from the fact that the numerator is     $O(|\tau|+1)$, while
the expression in square brackets in the denominator is nonzero, and $\ge \tau^2$ at infinity. 
The last integral in \eqref{eq-st2} admits the estimate   
$$
\le  C \int_{-\varepsilon_1|\rho-1|^{-1}}^{\varepsilon_2|\rho-1|^{-1}}
\frac{d\tau}{ |\tau|+1 }\le C \ln|\rho-1|^{-1},  
$$
provided that $|\cos\psi\pm 1|>\varepsilon_1,\varepsilon_2$.
Let us now consider the case, when one of the numbers  $  \cos\psi\pm 1 $ is small. For definiteness, we
consider the case, when   $\psi$ is close to zero (the case, when $\psi$ is close to $\pi$ is considered similarly). 
Then we have the following estimate of the integral in  \eqref{eq-st2}
\begin{multline}\label{eq-st3}
\le C \int_{-\varepsilon_1|\rho-1|^{-1}}^{\varepsilon_2|\rho-1|^{-1}}
\frac{d\tau}{(|\tau|+1)\sqrt{| |\rho-1|\tau+\cos\psi-1 |}}=\\
=
\frac C{\sqrt{|\rho-1|}}\int_{-\varepsilon_1|\rho-1|^{-1}}^{\varepsilon_2|\rho-1|^{-1}}
\frac{d\tau}{(|\tau|+1)\sqrt{\displaystyle\left| \tau+\frac{\cos\psi-1}{|\rho-1|}\right| }}\le
 \frac C{\sqrt{|\rho-1|}}.
\end{multline}

Thus, we obtain the estimate
$$
\int_{\mathbb{S}^1} |K(\rho,\omega,\psi)|d\omega= O(|\rho-1|^{-1/2}) \text{ as $\rho \to 1$ }.
$$
Similarly, we obtain 
$$
\int_{\mathbb{S}^1} |K(\rho,\omega,\psi)|d\psi= O(|\rho-1|^{-1/2}).
$$
These estimates and the Schur test \cite{HaSa1} give the desired norm estimate of the integral operator  
$$
\|K(\rho)\|=O(|\rho-1|^{-1/2}).
$$

The proof of the lemma is now complete.
\end{proof}

\begin{lemma}\label{lem2}
The operator-function $\widehat K(p)$ (see~\eqref{eq-33}) enjoys  the following properties
\begin{enumerate}
\item[1)]  it is holomorphic for all $p$ in the vertical strip 
$$
  \{ -2<\re p<1\} \subset \mathbb{C};
$$ 
\item[2)]  it ranges in integral oeprators on   $\mathbb{S}^1$ with smooth kernel;
\item[3)]  as $\im p\to\infty$  in the above described vertical strip, we have $\|\widehat K(p)\|\to 0$.
\end{enumerate}
\end{lemma}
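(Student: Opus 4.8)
The plan is to read off all three properties from the Mellin representation $\widehat K(p)=\int_0^\infty \rho^{p-1}K(\rho)\,d\rho$ (see \eqref{eq-mm1}) together with the norm bounds of Lemma~\ref{lem1}. The single most useful reformulation is the substitution $x=\log\rho$, under which
$$
\widehat K(\gamma+i\tau)=\int_{-\infty}^{\infty} e^{i\tau x}\,e^{\gamma x}K(e^x)\,dx,\qquad \gamma=\re p,
$$
so that on each vertical line $\widehat K(\gamma+i\tau)$ is the Fourier transform, in the operator-valued sense, of the map $x\mapsto e^{\gamma x}K(e^x)$ taking values in the bounded operators on $L^2(\mathbb{S}^1)$. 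Property~1) comes from the direct form, property~3) from this Fourier form, and property~2) from a pointwise analysis of the kernel.

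For property~1) I would first establish absolute convergence of the defining integral in operator norm on the open strip. Splitting $(0,\infty)$ into $(0,\tfrac12)$, $(\tfrac12,2)$, $(2,\infty)$ and inserting the three estimates of Lemma~\ref{lem1}, the integrand $\rho^{\re p-1}\|K(\rho)\|$ is $O(\rho^{\re p+1})$, an integrable $O(|\rho-1|^{-1/2})$, and $O(\rho^{\re p-2})$ respectively; the first is integrable at $0$ exactly when $\re p>-2$ and the last at $\infty$ exactly when $\re p<1$, which is precisely how the lines $\re p=-2$ and $\re p=1$ bound the strip. The convergence is uniform on every substrip $\gamma_1\le \re p\le\gamma_2$ with $-2<\gamma_1\le\gamma_2<1$, and since for each fixed $\rho$ the integrand $\rho^{p-1}K(\rho)$ is entire in $p$, holomorphy of $\widehat K$ follows from Morera's theorem: Fubini (justified by the absolute convergence just established) together with Cauchy's theorem for the scalar factor $\rho^{p-1}$ makes $\oint_\Gamma\widehat K(p)\,dp$ vanish on every closed contour $\Gamma$ in the strip.

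For property~3) the same three estimates show that $x\mapsto e^{\gamma x}K(e^x)$ lies in $L^1(\mathbb{R};\mathcal{B}(L^2(\mathbb{S}^1)))$: it decays like $e^{(\gamma+2)x}$ as $x\to-\infty$ and like $e^{(\gamma-1)x}$ as $x\to+\infty$ (both integrable precisely on the strip), while near $x=0$ it is $O(|x|^{-1/2})$, which is integrable. Hence the operator-valued Riemann--Lebesgue lemma — proved by approximating in $L^1$ by step functions valued in $\mathcal{B}(L^2(\mathbb{S}^1))$, for which the decay is elementary — gives $\|\widehat K(\gamma+i\tau)\|\to0$ as $|\tau|\to\infty$, uniformly for $\gamma$ in a compact subinterval of $(-2,1)$.

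Property~2) is the delicate one, and I expect it to be the main obstacle. Away from the diagonal, i.e.\ for $\psi\ne\pm\omega$, the kernel $\widehat K(p,\omega,\psi)=\int_0^\infty \rho^{p-1}K(\rho,\omega,\psi)\,d\rho$ is smooth: by the first part of Lemma~\ref{lem1} the denominator in \eqref{eq3} stays bounded away from zero on the support of integration, the integrand and all its $(\omega,\psi)$-derivatives are smooth in $\rho$, and differentiation under the integral sign preserves the $\rho\to0$ and $\rho\to\infty$ decay, so every differentiated integral still converges. The entire difficulty is concentrated on the diagonals $\psi=\pm\omega$, where, as $\rho\to1$, the numerator of $K$ vanishes to first order while the sum-of-squares denominator vanishes to second order, so that $K(\rho,\omega,\psi)\sim|\rho-1|^{-1}$ pointwise. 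The plan here is a local resolution of this singularity: near a diagonal point I would pass to the scaled variables built from $\rho-1$ and $\psi\mp\omega$, use the explicit sum-of-squares form of the denominator in \eqref{eq3}, and analyze the resulting model integral to pin down the exact regularity of the $\rho$-integrated kernel across the diagonal. This step requires information strictly finer than the operator-norm bound of Lemma~\ref{lem1}, which only controls the singularity after it has been averaged over the circle, and that is where the real work lies.
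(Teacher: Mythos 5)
Your treatment of properties 1) and 3) is correct and is essentially the paper's own proof with the details filled in: the paper establishes exactly your absolute-convergence estimate (splitting at $\rho=1/2,\,2$ and inserting \eqref{eq-33}, which is what produces the strip $-2<\re p<1$) and then disposes of the remaining items with the phrase ``well-known properties of the Mellin transform''; your Morera--Fubini argument and the operator-valued Riemann--Lebesgue lemma after the substitution $x=\log\rho$ are precisely those well-known properties, so on these two points you have completed, not deviated from, the paper's argument.

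The genuine gap is property 2), which you explicitly leave unproven --- and you should be aware that the ``real work'' you defer cannot be completed in the form stated. Your own pointwise observation is decisive: from \eqref{eq3}, on the diagonal (with $\cos\psi\ne 0$)
$$
K(\rho,\psi,\psi)=\frac{\sin\alpha\,|\cos\psi|}{\bigl(\cos^2\alpha+\sin^2\alpha\cos^2\psi\bigr)\,|\rho-1|}\,\bigl(1+O(|\rho-1|)\bigr),\qquad\rho\to1,
$$
a nonnegative, nonintegrable singularity, so the pointwise integral $\int_0^\infty\rho^{p-1}K(\rho,\omega,\psi)\,d\rho$ diverges at $\omega=\pm\psi$. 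Moreover, for $\omega$ near $\psi$ the kernel has a positive spike of height $\sim|\cos\omega-\cos\psi|^{-1}$ and width $\sim|\cos\omega-\cos\psi|$ centered at $\rho_0=\cos\omega/\cos\psi$, over which the factor $\rho^{p-1}$ is essentially constant, so integration in $\rho$ produces a kernel for $\widehat K(p)$ that grows like $\log\bigl(1/|\cos\omega-\cos\psi|\bigr)$ as $(\omega,\psi)$ approaches the set $\{\psi=\pm\omega\}$. Hence the Schwartz kernel of $\widehat K(p)$ is smooth only off the two diagonals and has a logarithmic singularity on them; no rescaling or local resolution of the singularity will yield smoothness across them, and statement 2) can only hold in this weakened, off-diagonal sense --- which your differentiation-under-the-integral argument does prove, and which still gives a square-integrable kernel, hence a Hilbert--Schmidt (in particular compact) operator, which is all the rest of the paper uses. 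Note that the paper's own proof is no better off here: its one-line appeal to ``well-known properties of the Mellin transform'' never confronts the diagonal at all, whereas you at least located the difficulty and correctly noted that the operator-norm bounds of Lemma~\ref{lem1} are too coarse to resolve it.
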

\begin{proof}
Indeed, for all $p$ in the vertical strip the integral \eqref{eq-mm1} converges,
since the integral of norms  
$$
 \int_0^\infty |\rho^{p-1}|\cdot  \|K(\rho)\| d\rho 
$$
is absolutely convergent by the estimates \eqref{eq-33}. The remaining statements of the lemma follow
from well-known properties of the Mellin transform.  
\end{proof}

We are now ready to describe the structure of the trace  \eqref{eq-op3a}.  
Namely, we showed that this trace is localized at the fixed point and after applying  Fourier  and then Mellin transform in the radial variable in the dual space, the operator reduces to an operator
of multiplication by the function  $\widehat K(p)$. Gathering these transformations, we obtain a representation of the trace  \eqref{eq-op3a} in the form
\begin{equation}\label{eq-63}
 \Delta_X^{-1/2} \chi \mathcal{F}^{-1}  \chi' \mathcal{M}^{-1}_{p\to\rho} \widehat K(p) \mathcal{M}_{\rho\to p}\chi' \mathcal{F}\chi:H^s(X)\longrightarrow H^{s+1}(X).
\end{equation} 
Here $\mathcal{F}$ is the Fourier transform, $\mathcal{M}_{r\to p}$ 
is the Mellin transform in the radial variable in the dual space, while the cut-off functions   $\chi$, $\chi'$ are written to obtain a bounded operator in the corresponding function spaces. In more detail,  $\chi$ is a smooth function on $X$  
  equal to zero outside a small neighborhood of zero and is identically equal to one  in a small neighborhood of zero, while  $\chi'$ is a function in the dual space identically equal to one at infinity and zero in a neighborhood of zero. The trace \eqref{eq-op3a} and the operator \eqref{eq-63} are equal up to compact summands by the locality principle.  

%\newpage

\paragraph{Example 3}

On the product $\mathbb{R}^2_{x,z}\times\mathbb{S}^1_y$, we consider the  action 
$$
g_\varphi(x,y,z)= (x\cos\varphi+z\sin\varphi,y+\varphi,-x\sin\varphi+z\cos\varphi),\qquad \varphi\in\mathbb{S}^1
$$
of the group $\mathbb{S}^1_\varphi$ (this action consists of screw motions: shifts along $y$ by $\varphi$ 
and rotations in the   $XOZ$-plane by angle $\varphi$).

Let us study the trace of the  $G$-operator
$$
D=\Delta^{-1}\int_{\mathbb{S}^1}T_\varphi d\varphi,\qquad\text{ where } T_\varphi u(x,y,z)=u(g^{-1}_\varphi(x,y,z)),
$$
on the submanifold  equal to the  horizontal coordinate plane:
$ 
X=\{z=0\}.
$ 

By the localization theorem the  trace 
\begin{equation}\label{tr-1}
i^!(D)=i^* \left(\Delta^{-1}\int_{\mathbb{S}^1}T_\varphi d\varphi\right)i_*:H^{s}(X)\to H^{s+1}(X) ,\quad s\in (-1,0),
\end{equation}
is localized at the submanifold   $X_{\mathbb{S}^1}=\{x=z=0\}\subset X$ equal to the the   $OY$-axis, about which we make rotations.

Then we represent $H^s(X)$   as the space of sections of a Hilbert bundle over $X_{\mathbb{S}^1}$ with fiber
  $H^s(\mathbb{R})$ and we denote this bundle by  $\mathcal{H}^s(X_{\mathbb{S}^1})$.  

We represent  the shift operator as  the composition
$$
T_\varphi=T'_\varphi T''_\varphi
$$ 
of the shift $T'_\varphi $ along $X_{\mathbb{S}^1}$ and a rotation $T''_\varphi$ in the $XOZ$-plane.

The structure of the trace \eqref{tr-1} is described in the following proposition.
\begin{proposition}
The trace  \eqref{tr-1} is a $G$-operator with operator-valued
symbol on   $X_{\mathbb{S}^1}$ modulo compact summands. More precisely, the trace can be written as  
\begin{equation}\label{tr-2}
i^!(D)= \int_{\mathbb{S}^1}\left( i^*\Delta^{-1}T''_\varphi i_*\right) T'_\varphi d \varphi:\mathcal{H}^{s}(X_{\mathbb{S}^1})\to \mathcal{H}^{s+1}(X_{\mathbb{S}^1}),
\end{equation}
where the operator in brackets is a family of pseudodifferential operators  on $X_{\mathbb{S}^1}$ 
with operator-valued symbols. Moreover, the operators in the family continuously depend
on  $\varphi$ in operator norm for all $\varphi\ne 0,\pi$ and the norms of the operators
and their symbols are uniformly bounded for all    $\varphi$.
\end{proposition}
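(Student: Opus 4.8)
The plan is to mirror the argument of Example~2, where an isolated fixed point led to a Mellin-type structure, but now to exploit the richer geometry of Example~3: the localization set $X_{\mathbb{S}^1}$ is an entire line rather than a single point, so instead of a purely Mellin picture we expect a pseudodifferential picture \emph{along} the axis, with an operator-valued symbol capturing the transverse behaviour. First I would set up coordinates adapted to the splitting $X=\{z=0\}$ with $X_{\mathbb{S}^1}=\{x=z=0\}$ the $OY$-axis, and make explicit the factorization $T_\varphi=T'_\varphi T''_\varphi$ announced before the statement: $T'_\varphi$ is translation in $y$ (a shift along $X_{\mathbb{S}^1}$) and $T''_\varphi$ is rotation in the $XOZ$-plane (a transverse action fixing $X_{\mathbb{S}^1}$ pointwise). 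Since $\Delta$ commutes with both the rotation and the translation, the composition $i^*\Delta^{-1}T''_\varphi i_*$ makes sense as an operator acting in the transverse variable $x$, parametrized by the point of $X_{\mathbb{S}^1}$.

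Next I would establish equation~\eqref{tr-2} by inserting the factorization into the integral \eqref{tr-1} and commuting $T'_\varphi$ to the right. Concretely, $i^!(D)=\int_{\mathbb{S}^1} i^*\Delta^{-1}T'_\varphi T''_\varphi i_*\,d\varphi$, and because $T'_\varphi$ is translation along the axis it commutes with $i^*$ and $i_*$ (both of which act only in the transverse directions) and with $\Delta$; pulling it to the right of the bracketed factor yields exactly $\int_{\mathbb{S}^1}\bigl(i^*\Delta^{-1}T''_\varphi i_*\bigr)T'_\varphi\,d\varphi$. To read this as a $G$-operator with operator-valued symbol on $X_{\mathbb{S}^1}$, I would pass to the Hilbert-bundle picture $\mathcal{H}^s(X_{\mathbb{S}^1})$ with fiber $H^s(\mathbb{R})$ (the transverse variable): the family $i^*\Delta^{-1}T''_\varphi i_*$ acts fiberwise, its dependence on the base point enters only through the symbol in the dual variable to $y$, and the operator-valued symbol is obtained by applying the Fourier transform along the axis. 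One then checks that, for fixed $\varphi$, $i^*\Delta^{-1}T''_\varphi i_*$ is indeed a $\psi$DO on $X_{\mathbb{S}^1}$ with values in operators on the transverse $H^s(\mathbb{R})$, using the explicit symbol of $\Delta^{-1}$ and the fact that $i^*$, $i_*$ reduce the order by the transverse $\delta$-function as in the Lemma of Section~\ref{par3}.

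The analytic heart of the statement is the regularity claim: continuity of $\varphi\mapsto i^*\Delta^{-1}T''_\varphi i_*$ in operator norm for $\varphi\ne 0,\pi$, together with uniform boundedness of the operators and their symbols in $\varphi$. Here I would compute the Schwartz kernel (or operator-valued symbol) of $i^*\Delta^{-1}T''_\varphi i_*$ explicitly, just as \eqref{eq1}--\eqref{eq3} were derived in Example~2: freeze the transverse integration, use that $\Delta^{-1}$ has symbol $(\,\cdot\,)^{-1}$ in the full dual variable, and carry out the $w$-integration coming from $i^*i_*$. The denominator will vanish precisely at the transverse angles $\varphi=0$ and $\varphi=\pi$, where the rotation $T''_\varphi$ becomes the identity or the central symmetry and the orbit degenerates — this is exactly where the transverse component of the motion fails to move points off $X$, so the kernel develops a nonintegrable singularity and norm continuity breaks down. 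Away from these two values the kernel is smooth and a Schur-test estimate, uniform in $\varphi$ and in the base point, gives the uniform bound; I would reuse the mechanism of Lemma~\ref{lem1}, where singularities were traced to the zeroes of a sum of squares in the denominator.

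I expect the main obstacle to be the uniform-in-$\varphi$ control near the degenerate angles $\varphi=0,\pi$: one must show that although the norm ceases to be continuous there, it stays \emph{bounded}, which requires a careful Schur-test estimate of the $\varphi$-dependent kernel that does not blow up as $\varphi\to 0,\pi$. Technically this means decomposing the transverse integral near the singular set and showing the integrable singularity (of the same square-root type as in \eqref{eq-st3}) produces an $O(1)$ rather than a divergent contribution after integrating in $\varphi$; the interplay of the axis variable $y$ with the transverse rotation, absent in Example~2, is what makes this more delicate than the isolated-fixed-point case.
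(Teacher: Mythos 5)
Your overall strategy coincides with the paper's: establish \eqref{tr-2} by the factorization $T_\varphi=T'_\varphi T''_\varphi$ and a direct computation, then compute the operator-valued symbol of $i^*\Delta^{-1}T''_\varphi i_*$ explicitly in the Fourier-dual variables, prove uniform bounds by the Schur test, and get norm continuity for $\varphi\ne 0,\pi$ from smooth dependence of the symbol on $\varphi$. One genuine difference: for the claim that the bracketed family consists of $\psi$DOs with operator-valued symbols, the paper does not verify this from scratch; it rewrites the bracket as $T''_\varphi\, i^*_\varphi\Delta^{-1} i_*$, where $i_\varphi$ is the embedding of the shifted submanifold $g_\varphi X$, and invokes the theory of translators from \cite{SaSt32}, which says $i^*_\varphi\Delta^{-1}i_*$ is a $\psi$DO. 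Your plan to check this by hand from the symbol of $\Delta^{-1}$ and the order count for $i^*$, $i_*$ is workable but heavier.

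However, your description of the analytic heart contains two misconceptions that would send you down the wrong path. First, at $\varphi=0,\pi$ the kernel does \emph{not} develop a nonintegrable singularity: $T''_0$ is the identity and $T''_\pi$ is the central symmetry of the $XOZ$-plane, both of which preserve $X$, so at these angles the operator is an honest (bounded) $\psi$DO on $X$. The failure of norm continuity there is not a blow-up phenomenon but a change of localization character: for $\varphi\ne 0,\pi$ the operator is localized at the axis $X_{\mathbb{S}^1}$, while at $\varphi=0,\pi$ it is spread over all of $X$ (the symbol kernel concentrates to a delta function, so convergence holds strongly but not in norm) --- this is exactly the content of the paper's Remark following the proposition. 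Second, the uniform bound is a pointwise-in-$\varphi$ statement about the symbol; nothing is "integrated in $\varphi$," and no square-root singularity of the type \eqref{eq-st3} appears. In the paper's Schur estimate, after conjugating by the weights $\langle z\rangle^{s}$, the change of variable $z=\xi\cos\varphi+|\sin\varphi|\langle\xi\rangle t$ absorbs the prefactor $|\sin\varphi|$ exactly, leaving the bound $\int_{\mathbb{R}}(t^2+1)^{-1}\bigl(1+(1+|t|)^2\bigr)^{-s/2}dt<\infty$ (finite because $s\in(-1,0)$), which is manifestly independent of $\varphi$; there is no delicate boundary-layer analysis near $\varphi=0,\pi$ at all. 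If you execute your plan, replace the anticipated singularity-taming step by this change of variables, and derive continuity for $\varphi\ne 0,\pi$ from differentiability of the explicit symbol in $\varphi$ when $\sin\varphi\ne 0$.
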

\begin{remark}
Note that pseudodifferential operators with operator-valued symbols of nonzero order were introduced in \cite{NSScS99}.
\end{remark}

\begin{remark}
The operator family $i^*\Delta^{-1}T''_\varphi i_*$ is not norm continuous at   $\varphi=0$ and $\pi$. 
This is easy to see, since for small    $\varphi\ne 0$ the corresponding operator
is localized at   $X_{\mathbb{S}^1}\subset X$, while for $\varphi=0$ it is a pseudodifferential operator
and, hence, is localized on the entire submanifold  $X$. Similarly, one shows
that there is no limit in norm as   $\varphi\to \pi$.
\end{remark}

\begin{proof}
A direct computation shows that  \eqref{tr-2} holds.
Let us show that the operator in round brackets in  \eqref{tr-2} is a pseudodifferential operator
with operator-valued symbol on   $X_{\mathbb{S}^1}$. {Indeed, writing this
operator as  
$$T''_\varphi i^*_\varphi\Delta^{-1} i_*,
$$ 
where $i_\varphi:g_\varphi X\to \mathbb{R}^2\times\mathbb{S}^1$
stands for the shift of the initial submanifold  $X$ by an element $g_\varphi$, one 
can show that this operator is a $\psi$DO, since the
factor  $T''_\varphi$ acts as identity along the base $X_{\mathbb{S}^1}$, while  $ i^*_\varphi\Delta^{-1} i_*$ 
is a translator and, as shown in~\cite{SaSt32}, is a $\psi$DO}. 

The boundedness of norms of these operators follows from their definition.
To prove the norm boundedness of the operator family  $ i^*\Delta^{-1}T''_\varphi i_*$,
and also obtain uniform boundedness of the symbols, we calculate the symbol   (as an operator in the space dual with 
respect to the Fourier transform 
of functions depending on $\xi,\eta$). This operator is equal to 
\begin{multline}
 u(\xi,\eta)\mapsto  \int_{\mathbb{R}} \frac{u(\xi\cos\varphi-\zeta\sin\varphi,\eta)d\zeta}{\xi^2+\eta^2+\zeta^2}=  \int_{\mathbb{R}} \frac{u(z,\eta)dz}{|\sin\varphi|\left(\xi^2+\eta^2+\left(\frac{\xi\cos\varphi-z}{\sin\varphi}\right)^2\right)}=\\
=  |\sin\varphi|\int_{\mathbb{R}} \frac{u(z,\eta)dz}{\xi^2-2\xi z \cos\varphi+z^2+\eta^2\sin^2\varphi}.
\end{multline}
Here we made the change of variable $z=\xi\cos\varphi-\zeta\sin\varphi$ in the integral.
We denote the latter symbol by   $A_\varphi(\eta)$.  It smoothly depends on 
  $\eta$ and is  {\em twisted homogeneous} (e.g., see \cite{NSScS99}) with respect to this variable:
$$
A_\varphi(\lambda \eta)=\lambda^{-1} \varkappa^{-1}_\lambda A_\varphi(\eta) \varkappa_\lambda,
\qquad \text{ for all }\lambda>0,
$$
where $\varkappa_\lambda f(z)= f(\lambda z)$ denotes the action of the group $\mathbb{R}_+$ of dilations. 

It remains to show that the symbol remains bounded as  $\varphi\to 0$.
By twisted homogeneity and unitarity of the group   $\varkappa_\lambda$, we assume that $\eta=1$ and obtain 
$$
A_\varphi(1) u= |\sin\varphi|\int_{\mathbb{R}} \frac{u(z)dz}{(\xi-z\cos\varphi)^2+\sin^2\varphi\langle z\rangle ^2}.
$$
Here we use the notation $\langle x\rangle =(1+x^2)^{1/2}$.

The commutative diagram
\begin{equation}
 \xymatrix{
     L^2(\mathbb{R}_z,\langle z\rangle ^{2s}) \ar[r]^{A_\varphi(1)} \ar[d]_{\langle z\rangle ^{s}}&  L^2(\mathbb{R}_z,\langle z\rangle ^{2(s+1)})  \ar[d]_{\langle z\rangle ^{s+1}}\\
L^2(\mathbb{R}_z) \ar[r]   &L^2(\mathbb{R}_z)
}
\end{equation}
imples that the norm of  $A_\varphi(1)$ is equal to the $L^2$-norm of the operator
  $\langle \xi\rangle ^s A_\varphi \langle z^{-s}\rangle $. So, it remains to estimate the norm of the integral operator
   in $L^2(\mathbb{R})$:
$$
u(z)\longmapsto \int_{\mathbb{R}} \frac{\langle \xi\rangle ^{s+1}|\sin\varphi|\langle z\rangle ^{-s}u(z)dz}{(\xi-z\cos\varphi)^2+\sin^2\varphi\langle z\rangle ^2}.
$$
The kernel of this integral operator is denoted by  $K(\xi,z)$. To estimate
the norm of this integral operator, we use the Schur test. To this end, let us obtain the uniform boundedness
of the integrals   
\begin{equation}\label{eq-mu4}
\int_\mathbb{R} |K(\xi,z)|dz, \qquad \int_\mathbb{R} |K(\xi,z)|d\xi.
\end{equation}
Let us estimate the first integral (the second is estimated similarly). 

We have
\begin{multline}
\int_\mathbb{R} |K(\xi,z)|dz=\int_{\mathbb{R}} \frac{\langle \xi\rangle ^{s+1}|\sin\varphi|\langle z\rangle ^{-s} dz}{(z-\xi\cos\varphi)^2+\sin^2\varphi\langle \xi\rangle ^2} 
=\int_{\mathbb{R}} \frac{dt}{t^2+1}\langle \xi\rangle ^{s } \Bigl(\langle \xi\cos\varphi+|\sin\varphi|\langle \xi\rangle t \rangle \Bigr)^{-s} =\\
=\int_{\mathbb{R}} \frac{dt}{t^2+1}
\left(\frac 1{\langle \xi\rangle }+\left(\frac \xi{\langle \xi\rangle }\cos\varphi+|\sin\varphi|t\right)^2\right)^{-s/2}\le\\
\le    \int_{\mathbb{R}} \frac{dt}{t^2+1} (1+(1+|t|)^2)^{-s/2} < \infty.
\end{multline}
Here in the second equality we made the change of variable   $z=\xi\cos\varphi+|\sin\varphi|\langle \xi\rangle t$  in the integral; while   the first inequality follows from the fact that  $x^{-s}$ is increasing for $s<0$;  the latter integral converges, since  $2+s< 1$.

Estimates of the norm of the second integral in~\eqref{eq-mu4} are obtained along the same lines.

So, by the Schur test the norm of the symbol is uniformly bounded. The continuity with respect to the operator
norm follows from the fact that, as is easy to see,   the symbol is differentiable 
with respect to the parameter   $\varphi$ if $\sin\varphi\ne 0$. Hence, the operator with this symbol
continuously depends on this parameter in the operator norm.  
\end{proof}

\newpage

%\bibliographystyle{unsrt}
%\bibliography{Elliptic_win} 

\end{document}